%
%
%
\documentclass{amsproc}

\newtheorem{theorem}{Theorem}[section]

\theoremstyle{definition}
\newtheorem{definition}[theorem]{Definition}
\newtheorem{example}[theorem]{Example}

\theoremstyle{remark}
\newtheorem{remark}[theorem]{Remark}

\newtheorem{proposition}[theorem]{Proposition}
\newtheorem{corollary}[theorem]{Corollary}


\catcode`\@=\active 

\numberwithin{equation}{section}



\begin{document}

\title{On the Characteristic Foliations of Metric Contact Pairs}


\author{Gianluca~Bande}
\address{Dipartimento di Matematica e Informatica, Universit{\`a} degli studi di Cagliari, Via Ospedale 72, 09124 Cagliari, Italy}
\email{gbande{\char'100}unica.it}

\author{Amine~Hadjar}
\address{Laboratoire de Math{\'e}matiques, Informatique et
Applications, Universit{\'e} de Haute Alsace, 4 Rue de
Fr{\`e}res Lumi{\`e}re, 68093 Mulhouse Cedex, France}
\email{mohamed.hadjar{\char'100}uha.fr}
\thanks{The first author was supported by the Project \textit{Start-up giovani ricercatori}--$2009$ of Universit\`a degli Studi di Cagliari. The second author was supported by a Visiting Professor fellowship at Universit\`a degli Studi di Cagliari in 2010,
financed by Regione Autonoma della Sardegna.}

\subjclass[2010] {Primary 53B35, 53D10; Secondary 53C12, 53C42}
\date{February 26, 2010 and, in revised form, February 26, 2010.}

\dedicatory{To John~C.~Wood for his sixtieth birthday}

\keywords{Metric contact pair, minimal foliations, metric contact structure}

\begin{abstract}
A contact pair on a manifold always admits an associated metric for which the two characteristic contact foliations are orthogonal.
We show that all these metrics have the same volume element. We also prove that the leaves of the characteristic foliations are minimal with respect to these metrics. We give an example where these leaves are not totally geodesic submanifolds.
\end{abstract}

\maketitle

\section{Introduction}
In a previous paper \cite{BH2} we have considered \textit{contact pair structures} and studied some properties of their associated metrics.
This notion was first introduced by Blair, Ludden and Yano \cite{Blair2} by the name {\it bicontact} structures, and is a special type of $f$-{\it structure} in the sense of Yano \cite{yano}.
More precisely, a \emph{metric contact pair} on an even dimensional manifold
is a triple $(\alpha_1 , \alpha_2 , g)$, where $(\alpha_1 , \alpha_2)$ is a contact pair (see \cite{BH}) with Reeb vector fields $Z_1$, $Z_2$, and $g$ a Riemannian metric such that $g(X, Z_i)=\alpha_i(X)$, for $i=1,2$, and for which the endomorphism field $\phi$ uniquely defined by $g(X, \phi Y)= (d \alpha_1 + d
\alpha_2) (X,Y)$ verifies
\begin{equation*}\label{d:cpstructure}
\phi^2=-Id + \alpha_1 \otimes Z_1 + \alpha_2 \otimes Z_2 , \;
\phi(Z_1)=\phi(Z_2)=0 .
\end{equation*}

Contact pairs always admit associated metrics with {\it decomposable} structure tensor $\phi$, i.e. $\phi$ preserves the two characteristic distributions of the pair (see \cite{BH2}).
In this paper, we first show that for a given contact pair all such associated metrics have the same volume element. Next we prove that with respect to these metrics the two characteristic foliations are orthogonal and minimal.
We end by giving an example where the leaves of the characteristic foliations are not totally geodesic.

All the differential objects considered in this paper are assumed to be smooth.

\section{Preliminaries on metric contact pairs}\label{s:prelim}
In this section we gather the notions concerning contact pairs
that will be needed in the sequel. We refer the reader to
\cite{Bande1, Bande2, BH, BH2, BH3, BGK, BK} for further informations and several
examples of such structures.

\subsection{Contact pairs and their characteristic foliations}\label{s:prelimcp}
Recall that a pair $(\alpha_1, \alpha_2)$ of $1$-forms on a manifold is said
to be a \emph{contact pair} of type $(h,k)$ if:
\begin{eqnarray*}
&\alpha_1\wedge (d\alpha_1)^{h}\wedge\alpha_2\wedge
(d\alpha_2)^{k} \;\text{is a volume form},\\
&(d\alpha_1)^{h+1}=0 \; \text{and} \;(d\alpha_2)^{k+1}=0.
\end{eqnarray*}
Since the form $\alpha_1$ (resp. $\alpha_2$) has constant class
$2h+1$ (resp. $2k+1$), the characteristic distribution $\ker \alpha_1 \cap \ker d\alpha_1$ (resp.
 $\ker \alpha_2 \cap \ker d\alpha_2$) is completely integrable and determines the so-called \emph{characteristic
  foliation} $\mathcal{F}_1$ (resp. $\mathcal{F}_2$) whose leaves are endowed with a contact form induced by $\alpha_2$ (resp. $\alpha_1$).

The equations
\begin{eqnarray*}
&\alpha_1 (Z_1)=\alpha_2 (Z_2)=1  , \; \; \alpha_1 (Z_2)=\alpha_2
(Z_1)=0 \, , \\
&i_{Z_1} d\alpha_1 =i_{Z_1} d\alpha_2 =i_{Z_2}d\alpha_1=i_{Z_2}
d\alpha_2=0 \, ,
\end{eqnarray*}
where $i_X$ is the contraction with the vector field $X$, determine completely the two vector fields $Z_1$ and $Z_2$, called \textit{Reeb vector fields}. 
Notice that $Z_i$ is nothing but the Reeb vector field of the contact form $\alpha_i$ on each leaf of $\mathcal{F}_j$ for $i\neq j$.

The tangent bundle of a manifold $M$ endowed with a contact pair
can be split in different ways. For $i=1,2$, let $T\mathcal F _i$
be the subbundle of $TM$ determined by the characteristic foliation of
$\alpha_i$, $T\mathcal G_i$ the subbundle whose fibers are
given by $\ker d\alpha_i \cap \ker \alpha_1 \cap \ker \alpha_2$
and $\mathbb{R} Z_1, \mathbb{R} Z_2$ the line bundles determined
by the Reeb vector fields. Then we have the following splittings:
\begin{equation*}
TM=T\mathcal F _1 \oplus T\mathcal F _2 =T\mathcal G_1 \oplus
T\mathcal G_2 \oplus \mathbb{R} Z_1 \oplus \mathbb{R} Z_2
\end{equation*}
Moreover we have $T\mathcal F _1=T\mathcal G_1 \oplus \mathbb{R}
Z_2 $ and $T\mathcal F _2=T\mathcal G_2 \oplus \mathbb{R} Z_1 $.

Notice that $d\alpha_1$ (resp. $d\alpha_2$) is symplectic on the vector bundle $T\mathcal G_2$ (resp. $T\mathcal G_1$).
\begin{example}
Take $(\mathbb{R}^{2h+2k+2},\alpha_1 , \alpha_2)$ where $\alpha_1$ (resp. $\alpha_2$) is the Darboux contact form on $\mathbb{R}^{2h+1}$
(resp. on $\mathbb{R}^{2k+1}$).
\end{example}
This is also a local model for all contact pairs of type $(h,k)$ (see \cite{Bande1, BH}). Hence a contact pair manifold is locally a product of two contact manifolds.

\subsection{Contact pair structures}
We recall now the definition of contact pair structure introduced in
\cite{BH2} and some basic properties.
\begin{definition}
A \emph{contact pair structure} on a manifold $M$ is a triple
$(\alpha_1 , \alpha_2 , \phi)$, where $(\alpha_1 , \alpha_2)$ is a
contact pair and $\phi$ a tensor field of type $(1,1)$ such that:
\begin{equation}\label{d:cpstructure}
\phi^2=-Id + \alpha_1 \otimes Z_1 + \alpha_2 \otimes Z_2 , \;
\phi(Z_1)=\phi(Z_2)=0
\end{equation}
where $Z_1$ and $Z_2$ are the Reeb vector fields of $(\alpha_1 ,
\alpha_2)$.
\end{definition}
It is easy to check that $\alpha_i \circ \phi =0$ for $i=1,2$, that the rank of $\phi$ is
equal to $\dim M -2$ , and that $\phi$ is almost complex on the vector bundle $T\mathcal G_1 \oplus
T\mathcal G_2$ .

Since we are also interested on the induced structures, we recall that
the endomorphism $\phi$ is said to be \textit{decomposable} if
$\phi (T\mathcal{F}_i) \subset T\mathcal{F}_i$, for $i=1,2$.
This condition is equivalent to $\phi(T\mathcal{G}_i)= T\mathcal{G}_i$.
In this case $(\alpha_1 , Z_1 ,\phi)$ (resp.
$(\alpha_2 , Z_2 ,\phi)$) induces, on every leaf of $\mathcal{F}_2$ (resp. $\mathcal{F}_1$), a contact form with structure tensor the restriction
of $\phi$ to the leaf.

\subsection{Metric contact pairs}
On manifolds endowed with contact pair structures it is natural
to consider the following kind of metrics:
\begin{definition}[\cite{BH2}]
Let $(\alpha_1 , \alpha_2 ,\phi )$ be a contact pair structure on
a ma-nifold $M$, with Reeb vector fields $Z_1$ and $Z_2$. A
Riemannian metric $g$ on $M$ is called:
\begin{enumerate}
\item \emph{compatible} if $g(\phi X,\phi Y)=g(X,Y)-\alpha_1 (X)
\alpha_1 (Y)-\alpha_2 (X) \alpha_2 (Y)$ for all vector fields $X$ and $Y$,
\item \emph{associated} if $g(X, \phi Y)= (d \alpha_1 + d
\alpha_2) (X,Y)$ and $g(X, Z_i)=\alpha_i(X)$, for $i=1,2$ and for
all vector fields $X,Y$. \label{ass-metric}
\end{enumerate}
\end{definition}
An associated metric is compatible, but the converse is not true.
\begin{definition}[\cite{BH2}]
A \emph{metric contact pair} (MCP) on a manifold $M$ is a
quadruple $(\alpha_1, \alpha_2, \phi, g)$ where $(\alpha_1,
\alpha_2, \phi)$ is a contact pair structure and $g$ an associated
metric with respect to it. The manifold $M$ is called a MCP manifold.
\end{definition}
Note that the equation
\begin{equation}\label{d:phi}
 g(X, \phi Y)= (d \alpha_1 + d\alpha_2) (X,Y)
\end{equation}
determines completely the endomorphism $\phi$.
So we can talk about a metric $g$ \emph{associated to a contact pair} $(\alpha_1 , \alpha_2)$ when  $g(X, Z_i)=\alpha_i(X)$, for $i=1,2$, and the endomorphism $\phi$ defined by equation  \eqref{d:phi} verifies \eqref{d:cpstructure}.

\begin{theorem}[\cite{BH2}]
For a MCP $(\alpha_1 , \alpha_2, \phi, g)$, the tensor $\phi$ is
decomposable if and only if the characteristic foliations $\mathcal{F}_1 ,
\mathcal{F}_2$ are orthogonal.
\end{theorem}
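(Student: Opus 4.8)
The plan is to prove the equivalence by showing both implications through a careful analysis of how $\phi$ interacts with the splitting $TM = T\mathcal{F}_1 \oplus T\mathcal{F}_2$ under the metric $g$. The key observation is that, by the defining relation \eqref{d:phi}, the metric encodes $\phi$ via $d\alpha_1 + d\alpha_2$, and that $d\alpha_i$ already knows about the characteristic foliations: $i_{Z_j} d\alpha_i = 0$ for all $i,j$, and $d\alpha_i$ is symplectic precisely on $T\mathcal{G}_{3-i}$. First I would record the elementary facts that $g(Z_1, Z_2) = \alpha_1(Z_2) = 0$, that $Z_i$ has unit length, and that $T\mathcal{G}_1 \oplus T\mathcal{G}_2$ is the $g$-orthogonal complement of $\mathbb{R}Z_1 \oplus \mathbb{R}Z_2$ (since $g(X,Z_i) = \alpha_i(X)$ vanishes exactly on $\ker\alpha_1 \cap \ker\alpha_2$).

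For the direction ``$\phi$ decomposable $\Rightarrow$ foliations orthogonal,'' recall $T\mathcal{F}_1 = T\mathcal{G}_1 \oplus \mathbb{R}Z_2$ and $T\mathcal{F}_2 = T\mathcal{G}_2 \oplus \mathbb{R}Z_1$. Since $Z_1 \perp Z_2$ and each $Z_i$ is orthogonal to $T\mathcal{G}_1 \oplus T\mathcal{G}_2$, orthogonality of $T\mathcal{F}_1$ and $T\mathcal{F}_2$ reduces to showing $T\mathcal{G}_1 \perp T\mathcal{G}_2$. Take $X \in T\mathcal{G}_1$ and $Y \in T\mathcal{G}_2$. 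By decomposability, $\phi Y \in T\mathcal{G}_2$, so I want to compute $g(X, \phi Y) = (d\alpha_1 + d\alpha_2)(X,Y)$; since $X \in \ker\alpha_1 \cap \ker\alpha_2 \cap \ker d\alpha_1$ we get $d\alpha_1(X,Y) = 0$, and since $Y \in \ker d\alpha_2$ we get $d\alpha_2(X,Y) = 0$, hence $g(X,\phi Y) = 0$. As $\phi$ is an automorphism of $T\mathcal{G}_2$ (decomposability gives $\phi(T\mathcal{G}_2) = T\mathcal{G}_2$), $\phi Y$ ranges over all of $T\mathcal{G}_2$ as $Y$ does, so $g(X, W) = 0$ for every $W \in T\mathcal{G}_2$, giving $T\mathcal{G}_1 \perp T\mathcal{G}_2$ and hence $T\mathcal{F}_1 \perp T\mathcal{F}_2$.

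For the converse, ``foliations orthogonal $\Rightarrow$ $\phi$ decomposable,'' assume $T\mathcal{F}_1 \perp T\mathcal{F}_2$, which as above is equivalent to $T\mathcal{G}_1 \perp T\mathcal{G}_2$. I must show $\phi(T\mathcal{G}_i) \subseteq T\mathcal{G}_i$; since $\phi$ restricted to $T\mathcal{G}_1 \oplus T\mathcal{G}_2$ is an automorphism (it is almost complex there), it suffices to show $\phi(T\mathcal{G}_1) \subseteq T\mathcal{G}_1$, say. Fix $X \in T\mathcal{G}_1$. First, $\alpha_i(\phi X) = (\alpha_i \circ \phi)(X) = 0$ for $i = 1,2$, so $\phi X \in \ker\alpha_1 \cap \ker\alpha_2 = T\mathcal{G}_1 \oplus T\mathcal{G}_2$. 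It remains to rule out any $T\mathcal{G}_2$-component, i.e. to show $g(\phi X, Y) = 0$ for all $Y \in T\mathcal{G}_2$; equivalently $(d\alpha_1 + d\alpha_2)(Y, X) = 0$. Now $Y \in \ker d\alpha_2$ kills the second term, and $d\alpha_1(Y,X)$: here I use orthogonality. Since $X \in T\mathcal{G}_1 \subseteq T\mathcal{F}_1$ and $Y \in T\mathcal{G}_2 \subseteq T\mathcal{F}_2$, and because $d\alpha_1(X, \cdot)$ and $d\alpha_1$'s kernel structure must be reconciled with the metric: one writes $d\alpha_1(X,Y) = g(X, \phi Y)$ minus the $d\alpha_2$ piece, and uses that $\phi Y \in T\mathcal{G}_1 \oplus T\mathcal{G}_2$ together with the orthogonality to extract that the $T\mathcal{G}_1$-component of $\phi Y$ is what pairs with $X$; iterating or using $\phi^2 = -\mathrm{Id}$ on $T\mathcal{G}_1 \oplus T\mathcal{G}_2$ closes the argument.

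The main obstacle I anticipate is the converse direction: unlike the forward direction, here I cannot assume $\phi$ respects the splitting, so I must bootstrap decomposability out of the orthogonality hypothesis alone, and the natural computations (expanding $g(\phi X, Y)$ via \eqref{d:phi}) mix the $T\mathcal{G}_1$ and $T\mathcal{G}_2$ contributions of $\phi Y$. The clean way around this is to exploit that $\phi$ is a $g$-compatible almost complex structure on $\mathcal{E} := T\mathcal{G}_1 \oplus T\mathcal{G}_2$ (compatibility of $g$ with $\phi$, part (1) of the definition, follows from ``associated''), that $d\alpha_1|_{\mathcal{E}}$ and $d\alpha_2|_{\mathcal{E}}$ are the two ``pieces'' of the associated $2$-form $g(\cdot, \phi \cdot)|_{\mathcal{E}}$, and that orthogonality makes $\mathcal{E} = T\mathcal{G}_1 \oplus T\mathcal{G}_2$ an orthogonal $\phi$-invariant-versus-not dichotomy that forces each summand to be $\phi$-invariant once one checks the radicals of $d\alpha_1|_{\mathcal{E}}$ and $d\alpha_2|_{\mathcal{E}}$ are $T\mathcal{G}_1$ and $T\mathcal{G}_2$ respectively. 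Packaging this linear-algebra step carefully — rather than grinding through indices — is where the real content lies.
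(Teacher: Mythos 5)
Your preliminary reductions (orthogonality of $\mathcal{F}_1,\mathcal{F}_2$ is equivalent to $T\mathcal{G}_1\perp T\mathcal{G}_2$, since the Reeb fields are unit, mutually orthogonal, and orthogonal to $T\mathcal{G}_1\oplus T\mathcal{G}_2$) and your forward implication are correct. The converse, however, has a genuine gap exactly at the point where you write ``here I use orthogonality'': you never establish $d\alpha_1(Y,X)=0$, and the sketch that follows (``one writes $d\alpha_1(X,Y)=g(X,\phi Y)$ minus the $d\alpha_2$ piece \dots\ iterating or using $\phi^2=-\mathrm{Id}$ \dots\ closes the argument'') is not an argument: it tries to control the components of $\phi Y$ by the very identity whose consequences you are in the middle of proving, and your final paragraph concedes that the decisive linear-algebra step is left unpackaged. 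Moreover, orthogonality is invoked in the wrong place: it is not needed to kill $d\alpha_1(Y,X)$, but it is needed (and silently used, behind your ``i.e.'') to pass from ``$g(\phi X,Y)=0$ for all $Y\in T\mathcal{G}_2$'' to ``$\phi X$ has no $T\mathcal{G}_2$-component,'' which fails for a non-orthogonal splitting.

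The missing step is in fact a one-liner. By definition $T\mathcal{G}_1=\ker d\alpha_1\cap\ker\alpha_1\cap\ker\alpha_2$, so $X\in T\mathcal{G}_1$ satisfies $i_X d\alpha_1=0$, hence $d\alpha_1(Y,X)=0$ for \emph{every} $Y$; combined with $i_Y d\alpha_2=0$ for $Y\in T\mathcal{G}_2$ this gives $g(Y,\phi X)=(d\alpha_1+d\alpha_2)(Y,X)=0$ unconditionally, exactly as in your forward computation. The hypothesis $T\mathcal{G}_1\perp T\mathcal{G}_2$ then enters only at the end: writing $\phi X=U+V$ with $U\in T\mathcal{G}_1$, $V\in T\mathcal{G}_2$ (legitimate since $\alpha_i\circ\phi=0$), one gets $0=g(\phi X,V)=g(U,V)+g(V,V)=g(V,V)$, so $V=0$ and $\phi X\in T\mathcal{G}_1$; symmetrically $\phi(T\mathcal{G}_2)\subset T\mathcal{G}_2$, and since $\phi Z_1=\phi Z_2=0$ this yields $\phi(T\mathcal{F}_i)\subset T\mathcal{F}_i$. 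With this substitution your proof closes; the appeals to compatibility, to the radicals of $d\alpha_i$ restricted to $T\mathcal{G}_1\oplus T\mathcal{G}_2$, and to an ``orthogonal-versus-not dichotomy'' are unnecessary.
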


Using a standard polarization on the symplectic vector bundles $T\mathcal G_i$ (see Section \ref{s:prelimcp}), one can see that for a given contact pair $(\alpha_1, \alpha_2)$ there always exist a decomposable $\phi$ and a metric $g$ such
that $(\alpha_1, \alpha_2, \phi, g)$ is a MCP (see \cite {BH2}).
This can be stated as:

\begin{theorem}[\cite{BH2}]
For a given contact pair on a manifold,
there always exists an associated metric for which the characteristic foliations are orthogonal.
\end{theorem}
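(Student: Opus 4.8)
The plan is to construct the metric directly from the canonical splitting
\[
TM=T\mathcal{G}_1\oplus T\mathcal{G}_2\oplus\mathbb{R}Z_1\oplus\mathbb{R}Z_2
\]
of Section \ref{s:prelimcp}, arranging \emph{from the outset} that this decomposition be orthogonal; since $T\mathcal{F}_1=T\mathcal{G}_1\oplus\mathbb{R}Z_2$ and $T\mathcal{F}_2=T\mathcal{G}_2\oplus\mathbb{R}Z_1$, the orthogonality of the two characteristic foliations will then be automatic (alternatively one may invoke the theorem above once $\phi$ is seen to be decomposable). The guiding observation, which I would establish first, is that $d\alpha_1+d\alpha_2$ interacts very simply with this splitting: on $T\mathcal{G}_1$ one has $d\alpha_1=0$, so $d\alpha_1+d\alpha_2$ restricts there to the symplectic form $\om_1:=d\alpha_2|_{T\mathcal{G}_1}$, and symmetrically it restricts on $T\mathcal{G}_2$ to the symplectic form $\om_2:=d\alpha_1|_{T\mathcal{G}_2}$; moreover $i_{Z_1}d\alpha_j=i_{Z_2}d\alpha_j=0$, while a vector of $T\mathcal{G}_1$ lies in $\ker d\alpha_1$ and a vector of $T\mathcal{G}_2$ lies in $\ker d\alpha_2$, so $d\alpha_1+d\alpha_2$ pairs $T\mathcal{G}_1$ trivially with $T\mathcal{G}_2$ and with $\mathbb{R}Z_1\oplus\mathbb{R}Z_2$. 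Thus the problem reduces to the two symplectic vector bundles $(T\mathcal{G}_1,\om_1)$ and $(T\mathcal{G}_2,\om_2)$ treated independently.

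On each $(T\mathcal{G}_i,\om_i)$ I would run the standard fibrewise polarization. Fix an auxiliary Riemannian metric $h$ on $M$ (available by a partition of unity) and let $h_i$ be its restriction to $T\mathcal{G}_i$; define the bundle endomorphism $A_i$ of $T\mathcal{G}_i$ by $\om_i(X,Y)=h_i(A_iX,Y)$. Then $A_i$ is $h_i$-skew-symmetric and invertible, so $-A_i^2$ is $h_i$-symmetric positive definite; let $P_i:=(-A_i^2)^{1/2}$ be its positive square root and set $J_i:=A_iP_i^{-1}=P_i^{-1}A_i$. The usual checks give $J_i^2=-\id$, $J_i$ an $h_i$-isometry, and that $g_i(X,Y):=\om_i(X,J_iY)=h_i(P_iX,Y)$ is a Riemannian metric on $T\mathcal{G}_i$ satisfying $g_i(X,\phi Y)=\om_i(X,Y)$ when $\phi$ is taken to be $J_i$ (one replaces $J_i$ by $-J_i$ if the sign convention in the relation $g(X,\phi Y)=(d\alpha_1+d\alpha_2)(X,Y)$ demands it). Because $P_i$ is a function of $A_i^2$, it depends smoothly on $A_i$, hence $J_i$ and $g_i$ are smooth over all of $M$.

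It remains to reassemble the pieces. I would define $g$ on $M$ by declaring $T\mathcal{G}_1$, $T\mathcal{G}_2$, $\mathbb{R}Z_1$, $\mathbb{R}Z_2$ mutually orthogonal, with $g|_{T\mathcal{G}_i}=g_i$ and $g(Z_i,Z_j)=\delta_{ij}$, and define $\phi$ by $\phi|_{T\mathcal{G}_i}=J_i$, $\phi Z_1=\phi Z_2=0$. Then $\phi$ is decomposable by construction, and one verifies the structure equation $\phi^2=-\id+\alpha_1\otimes Z_1+\alpha_2\otimes Z_2$ block by block (on $T\mathcal{G}_1\oplus T\mathcal{G}_2$ both $\alpha_1$ and $\alpha_2$ vanish, while $\alpha_i(Z_j)=\delta_{ij}$), the relation $g(X,Z_i)=\alpha_i(X)$ again block by block, and finally $g(X,\phi Y)=(d\alpha_1+d\alpha_2)(X,Y)$ by combining the defining property of $g_i$ and $J_i$ on each $T\mathcal{G}_i$ with the preliminary observation, which makes both sides vanish on the $Z_i$-directions and on the mixed $T\mathcal{G}_1$–$T\mathcal{G}_2$ terms. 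Hence $(\alpha_1,\alpha_2,\phi,g)$ is a metric contact pair with $\phi$ decomposable, and by the theorem above the characteristic foliations are $g$-orthogonal — as is, in fact, already built into the definition of $g$. There is no deep obstacle here; the two places requiring care are the sign bookkeeping in the compatibility identity $g(X,\phi Y)=(d\alpha_1+d\alpha_2)(X,Y)$ and the verification that the polar factor $P_i=(-A_i^2)^{1/2}$ depends smoothly on $A_i$, so that $\phi$ and $g$ are globally smooth.
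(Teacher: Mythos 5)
Your construction is essentially the paper's own argument: the paper obtains this theorem precisely by the standard fibrewise polarization on the symplectic vector bundles $T\mathcal{G}_1$ and $T\mathcal{G}_2$ (with respect to $d\alpha_2$ and $d\alpha_1$ respectively), assembling a decomposable $\phi$ and an associated metric making the splitting orthogonal, exactly as you do. Your proposal is correct, with the sign adjustment $\phi|_{T\mathcal{G}_i}=-J_i$ you already flag being the only bookkeeping point.
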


Let $(\alpha_1, \alpha_2 ,\phi, g )$ be a MCP on a manifold with decomposable $\phi$.
Then $(\alpha_i, \phi , g)$ induces a contact metric structure on the
leaves of the characteristic foliation $\mathcal{F}_j$ of $\alpha_j$, for $i \neq
j$ (see \cite{BH2}).

\begin{example}\label{mcp-product}
As a trivial example one can take two metric contact manifolds $(M_i, \alpha_i,g_i)$ and consider the MCP $(\alpha_1,\alpha_2,g_1 \oplus g_2)$ on $M_1\times M_2$. The characteristic foliations are given by the two trivial fibrations.
\end{example}

\begin{remark}
To get more examples of MCP on closed manifolds, one can imitate the constructions on flat bundles and Boothby-Wang fibrations given in \cite{BH3}
and adapt suitable metrics on the bases and fibers of these fibrations. See also Example \ref{liegroup} below which concerns a nilpotent Lie group and its closed nilmanifolds.
\end{remark}

\section{Minimal foliations}
Given any compatible metric $g$ on a manifold endowed with a contact pair structure $(\alpha_1, \alpha_2 , \phi)$ of type $(h,k)$,
with Reeb vector fields
$Z_1$ and $Z_2$, one can construct a local basis, called $\phi$-basis.
On an open set, on the orthogonal complement of $Z_1$ and $Z_2$, choose a vector field $X_1$ of
length $1$ and take $\phi X_1$. Then take the orthogonal
complement of $\{Z_1, Z_2, X_1 ,\phi X_1\}$ and so on. By iteration of this procedure,
one obtains a local orthonormal basis
$$
\{ Z_1 , Z_2 , X_1 , \phi X_1 , \cdots , X_{h+k} , \phi X_{h+k}\},
$$
which will be called $\phi$-basis and is the analog of a $\phi$-basis for almost contact structures, or $J$-basis in the case of
an almost complex structure $J$.

If $\phi$ is decomposable and  $g$ is an associated metric, since the characteristic foliations are orthogonal, it is possible to construct the $\phi$-basis in a better way. Starting with $X_1$ tangent to one of the characteristic foliations,
which are orthogonal, with a slight modification of the above
construction, we obtain a $\phi$-basis
$$
\{ Z_1 , X_1 , \phi X_1 , \cdots , X_h , \phi X_h ,Z_2 , Y_1 , \phi Y_1 , \cdots , Y_k ,
\phi Y_k  \}
$$
such that $\{ Z_1 , X_1 , \phi X_1 , \cdots , X_h ,
\phi X_h \}$ is a $\phi$-basis for the induced metric contact structures on the
leaves of $\mathcal F_2$, and
$\{Z_2 , Y_1 , \phi Y_1 , \cdots ,
Y_k , \phi Y_k  \}$ is a $\phi$-basis for the leaves of $\mathcal
F_1 $.

Using this basis and the formula for the volume form on
contact metric manifolds (see \cite{Blairbook}, for example), one can easily show the following:

\begin{proposition}\label{volumeform}
On a manifold endowed with a MCP $(\alpha_1,
\alpha_2, \phi, g)$ of type $(h,k)$, with a decomposable $\phi$,
the volume element of the Riemannian metric $g$ is given by:
\begin{equation}\label{MCP-volume}
dV= \frac{(-1)^{h+k}}{2^{h+k} h! k!} \alpha_1 \wedge (d\alpha_1) ^h \wedge \alpha_2 \wedge (d\alpha_2) ^k
\end{equation}
\end{proposition}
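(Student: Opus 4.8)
The plan is to compute the volume element directly in the adapted $\phi$-basis
$$
\{ Z_1 , X_1 , \phi X_1 , \dots , X_h , \phi X_h , Z_2 , Y_1 , \phi Y_1 , \dots , Y_k , \phi Y_k \}
$$
described just above the statement, which is orthonormal for $g$ because $g$ is associated (hence compatible) and $\phi$ is decomposable. Since this basis is orthonormal, the volume element $dV$ is, up to sign, the wedge of the dual coframe; the sign is fixed once we orient $M$ by declaring this basis positive. So the whole problem reduces to evaluating the right-hand side of \eqref{MCP-volume} on this basis and checking it equals $\pm 1$ with the correct sign.

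The key computation is the following. On the leaves of $\mathcal F_2$ the triple $(\alpha_1, Z_1, \phi, g)$ restricts to a contact metric structure of dimension $2h+1$, and $\{Z_1, X_1, \phi X_1, \dots, X_h, \phi X_h\}$ is a $\phi$-basis there; the standard volume-form formula for contact metric manifolds (cited as in \cite{Blairbook}) gives that, on this subbundle,
$$
\alpha_1 \wedge (d\alpha_1)^h = (-1)^h \, 2^h \, h! \; \theta^0 \wedge \theta^1 \wedge \dots \wedge \theta^{2h},
$$
where $\theta^0, \theta^1, \dots, \theta^{2h}$ is the coframe dual to $Z_1, X_1, \phi X_1, \dots, X_h, \phi X_h$. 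Here one uses $\alpha_1(Z_1)=1$, $\alpha_1$ vanishes on the rest of the basis, and $d\alpha_1(X,Y)=g(X,\phi Y)$ together with $i_{Z_1}d\alpha_1 = i_{Z_2}d\alpha_1 = 0$, so that $d\alpha_1$ on the $T\mathcal G_1$-block is the standard symplectic form $\sum_j X_j^\flat \wedge (\phi X_j)^\flat$ (sign conventions to be tracked). Likewise $\alpha_2 \wedge (d\alpha_2)^k = (-1)^k 2^k k! \, \eta^0 \wedge \dots \wedge \eta^{2k}$ with $\eta^0, \dots, \eta^{2k}$ dual to $Z_2, Y_1, \phi Y_1, \dots, Y_k, \phi Y_k$. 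Crucially, $\alpha_1$ and $d\alpha_1$ annihilate $Z_2$ and $T\mathcal G_2$, and $\alpha_2$, $d\alpha_2$ annihilate $Z_1$ and $T\mathcal G_1$, so the two pieces live on complementary blocks of the splitting $TM = T\mathcal G_1 \oplus \mathbb R Z_2 \oplus T\mathcal G_2 \oplus \mathbb R Z_1$ and wedge together cleanly:
$$
\alpha_1 \wedge (d\alpha_1)^h \wedge \alpha_2 \wedge (d\alpha_2)^k = (-1)^{h+k} 2^{h+k} h!\, k! \; \theta^0 \wedge \dots \wedge \theta^{2h} \wedge \eta^0 \wedge \dots \wedge \eta^{2k}.
$$
Since the latter wedge of the full dual coframe is $\pm dV$, rearranging gives \eqref{MCP-volume}.

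The main obstacle is purely bookkeeping: getting every sign and combinatorial factor right. Three places need care. First, the factor $(-1)^h 2^h h!$ in the contact volume formula depends on the orientation convention for the contact metric structure and on whether one writes $d\alpha_1(X,Y) = g(X,\phi Y)$ or its negative; I would fix this by expanding $(d\alpha_1)^h$ as a multinomial in $\sum_j X_j^\flat \wedge (\phi X_j)^\flat$, which produces $h!$ and powers of $2$ from reordering the $2$-forms, and a global sign from moving $\alpha_1 = \theta^0$ past them. Second, reordering the coframe $\theta^0 \wedge \dots \wedge \theta^{2h} \wedge \eta^0 \wedge \dots \wedge \eta^{2k}$ into the order dual to the listed $\phi$-basis introduces another sign $(-1)^{\text{something}}$; this is absorbed by the orientation choice, so one simply \emph{defines} $dV$ to be this wedge. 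Third, one must note that the outcome is independent of which characteristic foliation we started the $\phi$-basis construction from, and independent of the particular $X_i, Y_j$ chosen, which is automatic since the left-hand side of \eqref{MCP-volume} is canonically defined by $(\alpha_1,\alpha_2)$ alone — and this is exactly the observation that makes the proposition meaningful, as it shows all associated metrics with decomposable $\phi$ share the volume element. I would present the argument by first doing the single-factor (contact metric) computation carefully, then assembling the product using the block structure of the splitting.
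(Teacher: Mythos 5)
Your proposal is correct and follows essentially the same route as the paper: the paper's (very brief) argument is precisely to take the adapted $\phi$-basis split along the two orthogonal characteristic foliations and apply the standard volume-form formula for contact metric manifolds (as in \cite{Blairbook}) on each block, which is what you carry out in more detail. The sign/combinatorial bookkeeping you flag is exactly the content the paper leaves to the reader, and your block-by-block assembly handles it correctly.
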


A direct application of the minimality criterion of Rummler (see \cite{Rum} page 227)
to the volume form on a
MCP manifold yields the following result:
\begin{theorem}
On a MCP manifold $(M, \alpha_1,\alpha_2, \phi,
g)$ with decomposable $\phi$, the characteristic foliations are
minimal.
\end{theorem}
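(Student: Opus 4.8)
The plan is to apply Rummler's criterion for minimality of a foliation, which says that a $p$-dimensional foliation $\mathcal{F}$ on a Riemannian manifold is minimal if and only if there exists a $p$-form $\kappa$ which is relatively closed (that is, $d\kappa$ vanishes on $(p+1)$-tuples of vectors at least $p$ of which are tangent to $\mathcal{F}$) and which restricts to the leafwise volume form on each leaf. I would treat the two characteristic foliations $\mathcal{F}_1$ and $\mathcal{F}_2$ separately but symmetrically; by the symmetry of the setup it suffices to handle $\mathcal{F}_1$, whose leaves have dimension $2k+1$ and carry the contact form induced by $\alpha_2$ with Reeb field $Z_2$.

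The key observation is that Proposition~\ref{volumeform} already factors the volume form of $g$ as a wedge $\frac{(-1)^{h+k}}{2^{h+k}h!\,k!}\,\alpha_1\wedge(d\alpha_1)^h\wedge\alpha_2\wedge(d\alpha_2)^k$, and the second factor $\frac{(-1)^{k}}{2^{k}k!}\,\alpha_2\wedge(d\alpha_2)^k$ is, by the same volume-form formula for contact metric manifolds applied on a leaf, exactly the leafwise Riemannian volume form of $\mathcal{F}_1$. So the natural candidate for Rummler's form on $\mathcal{F}_1$ is
\begin{equation*}
\kappa_1 = \frac{(-1)^{k}}{2^{k}\,k!}\,\alpha_2\wedge(d\alpha_2)^k .
\end{equation*}
The first step is to check that $\kappa_1$ restricts correctly to each leaf of $\mathcal{F}_1$: on such a leaf $\alpha_1$ and $d\alpha_1$ pull back to zero (the leaf is tangent to $\ker\alpha_1\cap\ker d\alpha_1$), so the full volume form of $g$ restricted transversally is governed by $\kappa_1$, and comparing with the contact-metric volume formula on the leaf (using the induced $\phi$-basis $\{Z_2,Y_1,\phi Y_1,\dots,Y_k,\phi Y_k\}$ described above) gives equality. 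The second step is the relative closedness: $d\kappa_1 = \frac{(-1)^{k}}{2^{k}\,k!}\,(d\alpha_2)^{k+1}$, and since $(d\alpha_2)^{k+1}=0$ identically by the definition of a contact pair of type $(h,k)$, in fact $d\kappa_1=0$ on the nose, which is far stronger than what Rummler requires. Hence $\mathcal{F}_1$ is minimal, and by the symmetric argument with $\kappa_2 = \frac{(-1)^{h}}{2^{h}\,h!}\,\alpha_1\wedge(d\alpha_1)^h$ so is $\mathcal{F}_2$.

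The main obstacle — really the only point needing genuine care — is verifying that $\kappa_1$ is the leafwise volume form, i.e. matching the normalization constant and sign. This amounts to recalling the standard fact that on a $(2k+1)$-dimensional contact metric manifold with contact form $\alpha_2$ the Riemannian volume is $\frac{(-1)^{k}}{2^{k}k!}\alpha_2\wedge(d\alpha_2)^k$ (see \cite{Blairbook}), together with the fact from Section~\ref{s:prelim} that $(\alpha_2,Z_2,\phi,g)$ restricts to a contact metric structure on each leaf of $\mathcal{F}_1$; once that identification is in place, relative closedness is immediate from $(d\alpha_2)^{k+1}=0$ and nothing further is needed. (One should also note that orthogonality of $\mathcal{F}_1$ and $\mathcal{F}_2$, guaranteed by decomposability of $\phi$, is what makes the transverse volume coincide with the complementary factor $\kappa_2$, so that the bundle-like bookkeeping in Rummler's criterion goes through cleanly.)
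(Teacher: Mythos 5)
Your proposal is correct and follows essentially the same route as the paper: identify the characteristic (Rummler) form of each foliation as the corresponding constant multiple of $\alpha_2\wedge(d\alpha_2)^k$ (resp. $\alpha_1\wedge(d\alpha_1)^h$) via the volume formula of Proposition~\ref{volumeform}, and then note these forms are closed by the contact pair condition, so Rummler's criterion applies. Your extra care about the normalization constant and about vanishing on $T\mathcal F_2$ (which holds since $T\mathcal F_2=\ker\alpha_2\cap\ker d\alpha_2$) only makes explicit what the paper leaves implicit.
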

\begin{proof}
Recall the minimality criterion of Rummler: let $\mathcal F$ be a $p$-dimensional foliation on a Riemannian manifold and $\omega$ its characteristic form (i.e. the $p$-form
which vanishes on vectors orthogonal to $\mathcal F$ and whose restriction to $\mathcal F$ is the volume of the induced metric on the leaves). Then
$\mathcal F$ is minimal iff $\omega$ is closed on $T\mathcal F$ (i.e. $d\omega(X_1,...,X_p,Y)=0$ for $X_1$, ..., $X_p$ tangent to $\mathcal F$).

Let $\mathcal F_i$ be the characteristic foliation of $\alpha_i$.
As the volume element of the Riemannian metric $g$ is given by \eqref{MCP-volume}, the characteristic form of $\mathcal F_1$ (resp. $\mathcal F_2$) is,
up to a constant, $\alpha_2 \wedge (d\alpha_2) ^k$ (resp. $\alpha_1 \wedge (d\alpha_1) ^h$). But these forms are closed
by  the contact pair condition, and then the
criterion applies directly.
\end{proof}

Since every manifold endowed with a contact pair always admits
an associated metric with decomposable $\phi$, we
have a statement already proved in \cite{BK}:
\begin{corollary}
On every manifold endowed with a contact pair there exists a
metric for which the characteristic foliations are orthogonal and minimal.
\end{corollary}

\begin{remark}
Although a contact pair manifold is locally a product of two contact manifolds (see Section \ref{s:prelimcp}), an associated metric for which the characteristic foliations are orthogonal is not necessary locally a product as in Example \ref{mcp-product}. Here is an interesting case:
\end{remark}
\begin{example}\label{liegroup}
Let us consider the simply connected $6$-dimensional nilpotent Lie group $G$ with structure
equations:
\begin{eqnarray*}
&d\omega_3= d\omega_6=0 \; \; , \; \; d\omega_2= \omega_5 \wedge
\omega _6 ,\\
&d\omega_1=\omega_3 \wedge \omega_4 \; \; , \; \;
d\omega_4= \omega_3 \wedge \omega_5 \; \; , \; \; d\omega_5 =
\omega_3 \wedge \omega_6 \, ,
\end{eqnarray*}
where the $\omega_i$'s form a basis for the cotangent space of $G$
at the identity.

The pair $(\omega_1 , \omega_2)$ is a contact pair of type $(1,1)$
with Reeb vector fields $(X_1, X_2)$, the $X_i$'s being dual to
the $\omega_i$'s. The characteristic distribution of $\omega_1$ (resp. $\omega_2$) is spanned by $X_2$, $X_5$ and $X_6$ (resp. $X_1$, $X_3$ and $X_4$).

The left invariant metric
\begin{equation}
g=\omega_1 ^2+\omega_2 ^2+\frac{1}{2}\sum_{i=3}^6 \omega_i ^2
\end{equation}
is associated to the contact pair $(\omega_1 , \omega_2)$ with decomposable structure tensor $\phi$ given by
$\phi(X_6)=X_5$ and $\phi (X_4)=X_3$ .

The characteristic foliations have minimal leaves. Moreover the leaves tangent to the identity of $G$ are Lie subgroups isomorphic to the Heisenberg group.

Notice that these foliations are not totally geodesic since $g(\nabla _{X_4} X_3,X_5)\neq0$ and $g(\nabla _{X_5} X_6,X_3)\neq0$, where $\nabla$ is the Levi-Civita connection of this metric.
So the metric $g$ is not locally a product.

Since the structure constants of the group are rational, there exist lattices $\Gamma$ such that $G/\Gamma$ is
compact. Since the MCP on G is left invariant, it descends to all quotients $G/\Gamma$ and we obtain closed
nilmanifolds carrying the same type of structure.
\end{example}

\bibliographystyle{amsalpha}

\end{document}